\documentclass[10pt]{article}
\usepackage[T1]{fontenc}
\usepackage{lmodern}
\usepackage{microtype}
\usepackage{amsmath,amssymb,amsthm}
\usepackage[hidelinks]{hyperref}
\usepackage[margin=.9in]{geometry}

\newtheorem{theorem}{Theorem}
\newcommand{\Z}{\mathbb Z}

\newcommand{\vTwo}{\nu_2}

\title{An infinite small-step $\Z^3$-walk with no collinear triple}
\author{Stijn Cambie \and Erik Kalviainen}
\date{September 1, 2026}

\begin{document}
\maketitle

\begin{abstract}
We construct an infinite walk in $\Z^3$ whose steps come from a fixed set of
sixteen vectors and no three of whose vertices are collinear, answering a
problem of Gerver and Ramsey popularized as Erd\H{o}s Problem~193.
\end{abstract}

\begin{theorem}
There is an infinite sequence $P_0,P_1,\ldots$ in $\Z^3$ with no three
collinear terms and with
\[
 P_{n+1}-P_n\in\{-2,-1,0,1,2\}^2\times\{1,2,\ldots,7\}
 \qquad(n\geq0).
\]
Only sixteen successive displacement vectors occur.
\end{theorem}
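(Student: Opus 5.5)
The plan is to build the walk from a \emph{ruler sequence}: the step $P_{n+1}-P_n$ will depend only on a bounded function of $\vTwo(n+1)$, for instance $\vTwo(n+1)$ capped at $7$ together with a parity or residue bit. This gives a small fixed alphabet of steps; I will aim for exactly sixteen vectors $(a_k,b_k,c_k)$ with $c_k\in\{1,\dots,7\}$. Because every third coordinate $c_k$ is positive, the $z$-coordinate $Z_n$ of $P_n$ is strictly increasing. So a collinear triple $P_a,P_b,P_c$ with $a<b<c$ is equivalent to equality of the average horizontal velocities over the two adjacent windows:
\[
\frac{X_b-X_a}{Z_b-Z_a}=\frac{X_c-X_b}{Z_c-Z_b},\qquad \frac{Y_b-Y_a}{Z_b-Z_a}=\frac{Y_c-Y_b}{Z_c-Z_b}.
\]
Equivalently, the two $2\times 2$ determinants formed from the window sums $(S^x,S^z)$ and $(S^y,S^z)$ of the windows $[a,b)$ and $[b,c)$ both vanish.

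The first key step is to compute window sums of a ruler-type sequence. Over a dyadic block $[m2^j,(m+1)2^j)$, the multiset of values $\vTwo(n+1)$ is completely explicit: $2^{j-i-1}$ copies of each $i<j$, plus one extra term governed by $\vTwo(m+1)+j$. Hence every window sum is a fixed linear combination of block counts plus a bounded number of boundary terms. I would choose the vectors so that the map $k\mapsto(a_k,b_k,c_k)$ gives the average velocity over a long dyadic block a ``fingerprint'', for example a specific rational slope whose $2$-adic valuation or residue records the largest $j$ involved.

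The second step is to decompose an arbitrary window $[a,b)$ into $O(\log)$ maximal dyadic blocks. The contribution of the largest block, at level $\vTwo$ of the scale of the window, then dominates the relevant $2$-adic valuation of the determinants.

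The third step is a comparison of the two adjacent windows $[a,b)$ and $[b,c)$, which share the endpoint $b$. I would split into cases according to the relative sizes $\vTwo(b)$, $\lfloor\log_2(b-a)\rfloor$ and $\lfloor\log_2(c-b)\rfloor$.
\begin{itemize}
\item If the windows are of very different dyadic scale, I will show that one of the two determinants has a $2$-adic valuation forced by the larger window and is therefore nonzero.
\item If the scales are comparable, I will use the capped value of $\vTwo$ at the seam $b$, namely the step contributed by $n=b-1$ (or $n=b$), which breaks the symmetry between the left and right windows.
\end{itemize}
Because the values are capped, large scales behave periodically. I would reduce the remaining configurations to a finite check: modulo a suitable power of $2$, only finitely many residue patterns of $(a,b,c)$ occur. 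These can be settled by a short verification, computer-assisted if necessary, once the sixteen vectors are fixed.

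The main obstacle is the second and third steps together: choosing the concrete sixteen vectors so that no cancellation occurs between the $x$- and $y$-determinants simultaneously. The $\{-2,\dots,2\}^2$ range is tight, so there is little freedom. My first attempt would be to let the $x$-coordinate encode $\vTwo(n+1)\bmod 2$ and the $y$-coordinate encode $\lfloor \vTwo(n+1)/2\rfloor\bmod 2$, while the $z$-coordinate $c_k$ grows with the capped valuation. I would then search, by computer over this small parameter space, for an assignment under which the finite residual check passes. The infinite part of the argument is handled by the valuation estimate from the dyadic decomposition.
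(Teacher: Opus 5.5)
Your framework cannot work for any choice of the sixteen vectors; the difficulty is not just an unfinished case analysis. Suppose $P_{n+1}-P_n=g(\nu_2(n+1))$ with $g$ taking finitely many values. By pigeonhole there are $j<j'$ with $g(j)=g(j')$. Put $d=j'-j$ and take $a=(2^d-2)2^j$, $b=(2^d-1)2^j$, $c=2^{j'}$.

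Your own block computation then gives $P_b-P_a$ and $P_c-P_b$ as the same sum $\sum_{1\le r<2^j}g(\nu_2(r))$ plus one boundary term each. The boundary terms are $g(j+\nu_2(2^d-1))=g(j)$ and $g(j+\nu_2(2^d))=g(j')$ respectively. Hence $P_b-P_a=P_c-P_b$, so $P_a,P_b,P_c$ are collinear.

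The variants you mention do not escape this:
\begin{itemize}
\item A residue bit of $n$ modulo $2^k$ changes nothing; run the same argument with $j,j'\ge k$.
\item If every coordinate is capped, the step sequence is periodic with some period $p$, so already $P_0,P_p,P_{2p}$ are collinear.
\item Your concrete first attempt depends only on $\nu_2(n+1)\bmod 4$ and $\min(\nu_2(n+1),7)$. So $g(7)=g(11)$, and $a=14\cdot 2^7$, $b=15\cdot 2^7$, $c=2^{11}$ is a collinear triple.
\end{itemize}
Thus the claimed $2$-adic domination by the largest dyadic block cannot hold. The ``periodic behaviour at large scales'' that you wanted to reduce to a finite check is exactly what produces collinear triples, and no computer search over this family can succeed.

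The missing idea is a rotating state. The paper takes $u_n=i^{s_2(n)}$. Since $u_{n+1}/u_n=i^{1-\nu_2(n+1)}$, its planar walk is a ruler sequence \emph{integrated} through rotations. An aligned dyadic block then contributes $z_{(m+1)2^j}-z_{m2^j}=(1+i)^j u_m$, so the whole block is rotated rather than altered in a single term. The paper also records the state in the point itself, via $w_n=2z_n+c_{\alpha_n}$ and $h_n=4n+\alpha_n$. This gives the exact identity $\nu_2(|w_n-w_m|^2)=\nu_2(h_n-h_m)$ for all $m<n$. A collinear triple would then force $\nu_2(A)=\nu_2(B)=\nu_2(A+B)$ for the positive height gaps $A,B$, which is impossible. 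No case analysis or finite verification is needed.
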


\begin{proof}
Let $s_2(n)$ be the number of $1$s in the binary expansion of $n$, and put
\[
 u_n=i^{s_2(n)},\qquad z_n=\sum_{0\leq r<n}u_r\in\Z[i].
\]
Thus $(z_n)$ is a nearest-neighbor walk in the Gaussian lattice.  Binary
expansion gives, for $\varepsilon\in\{0,1\}$,
\begin{equation}\label{eq:recurrence}
 u_{2n+\varepsilon}=i^\varepsilon u_n,
 \qquad z_{2n+\varepsilon}=(1+i)z_n+\varepsilon u_n.
\end{equation}

We first record the only property of this walk that we need.  If $0\leq m<n$
and $u_m=u_n$, then
\begin{equation}\label{eq:same-state}
 \vTwo\bigl(|z_n-z_m|^2\bigr)=\vTwo(n-m).
\end{equation}
Indeed, whenever $n-m$ is even, the endpoints have the same parity, say
$m=2a+\varepsilon$ and $n=2b+\varepsilon$.  From
\eqref{eq:recurrence}, $u_m=u_n$ implies $u_a=u_b$ and
\[
 z_n-z_m=(1+i)(z_b-z_a).
\]
If $r=\vTwo(n-m)$, repeating this reduction $r$ times gives
\[
 z_n-z_m=(1+i)^r(z_{n'}-z_{m'}),
\]
where $n'-m'$ is odd.  The last difference is a sum of an odd number of
Gaussian units.  Writing it as $x+iy$, we have $x+y$ odd, so $x^2+y^2$ is
odd.  Since $|1+i|^2=2$, equation~\eqref{eq:same-state} follows.

Let $\alpha_n\in\{0,1,2,3\}$ be determined by $u_n=i^{\alpha_n}$.  Tag the
four states by the corners
\[
 c_0=0,\qquad c_1=i,\qquad c_2=-1+i,\qquad c_3=-1
\]
of a unit square in cyclic order, and define
\begin{equation}\label{eq:points}
 w_n=2z_n+c_{\alpha_n},\qquad
 h_n=4n+\alpha_n,\qquad
 P_n=(\Re w_n,\Im w_n,h_n).
\end{equation}
We claim that, for every $m<n$,
\begin{equation}\label{eq:all-pairs}
 \vTwo\bigl(|w_n-w_m|^2\bigr)=\vTwo(h_n-h_m).
\end{equation}
Put $a=\alpha_m$, $b=\alpha_n$, and $d=n-m$.  Then
\[
 w_n-w_m=2(z_n-z_m)+c_b-c_a,
 \qquad h_n-h_m=4d+b-a.
\]
If $a=b$, equation~\eqref{eq:all-pairs} follows from
\eqref{eq:same-state}.  If $b-a$ is odd, exactly one coordinate of
$c_b-c_a$ is odd; hence the squared modulus and $4d+b-a$ are both odd.  If
$b-a=\pm2$, both planar coordinates are odd, so their squared sum is $2$
modulo $4$, while $4d\pm2$ has valuation one.  This proves
\eqref{eq:all-pairs}.

The sequence has bounded steps.  If $j=\alpha_n$ and $k=\alpha_{n+1}$, then
\begin{equation}\label{eq:steps}
 w_{n+1}-w_n=2i^j+c_k-c_j,
 \qquad h_{n+1}-h_n=4+k-j.
\end{equation}
The second quantity lies between $1$ and $7$.  The corner $c_j$ was oriented
so that $i^j$ points outward from the square.  The component of $c_k-c_j$ in
that direction is $0$ or $-1$, and its perpendicular component has absolute
value at most $1$.  Thus both planar step coordinates have absolute value at
most $2$.  Equation~\eqref{eq:steps} also shows that the step is determined by
$(j,k)$, so at most sixteen steps occur.  Since $h_{n+1}>h_n$, the points are
distinct.

It remains to exclude collinearity.  Suppose $P_a,P_b,P_c$ are collinear for
$a<b<c$, and write
\[
 A=h_b-h_a,\qquad B=h_c-h_b,
 \qquad X=w_b-w_a,\qquad Y=w_c-w_b.
\]
Because $A,B>0$, collinearity gives one common complex slope,
\[
 \frac XA=\frac YB=\frac{X+Y}{A+B}.
\]
Extend $\vTwo$ to nonzero rational numbers in the usual way.  Applying
\eqref{eq:all-pairs} to the three chords and taking squared moduli of the
slopes yields
\[
 \vTwo(A)=\vTwo(B)=\vTwo(A+B).
\]
This is impossible: if the first two valuations equal $t$, then $A/2^t$ and
$B/2^t$ are odd, so $(A+B)/2^t$ is even.  Hence no three points $P_n$ are
collinear.
\end{proof}

\paragraph{Attribution and AI disclosure.}
Erik Kalviainen developed the first proof, relying on a two-dimensional
discrete Hilbert curve, and formalized it in Lean with exact computational
checks and an interactive visualization.  Stijn Cambie became involved at that
point and proposed two successive simplifications: first, encoding all four
Hilbert terminal states directly to eliminate the selector; second, replacing
the Hilbert machinery by the complex Gaussian-lattice walk used in this proof.
Cambie's simplifications and drafts and Kalviainen's formalization and
presentation were AI-assisted.  Both authors have checked the proof and state
the result above as an unconditional theorem.  Neither AI output nor finite
computation is used as a premise.

{\footnotesize\raggedright
\noindent\textbf{Author information.}
Stijn Cambie, Department of Computer Science, KU Leuven Campus
Kulak-Kortrijk, 8500 Kortrijk, Belgium
(\href{mailto:stijn.cambie@hotmail.com}{\nolinkurl{stijn.cambie@hotmail.com}});
Erik Kalviainen, independent researcher, Waterloo, Ontario, Canada
(\href{mailto:ekalvi@gmail.com}{\nolinkurl{ekalvi@gmail.com}}).
S.C. is supported by FWO grant 1225224N.
Supplementary exposition and visualization: \url{https://erdos-193.q5m.ai}.
Source, Lean formalization, and exact checks:
\url{https://github.com/ekalvi/erdos-193}.\par}

\end{document}